\numberwithin{equation}{section}
\numberwithin{figure}{section}
\theoremstyle{plain}
\newtheorem{thm}{\protect\theoremname}
  \theoremstyle{definition}
  \newtheorem{example}[thm]{\protect\examplename}
\newenvironment{lyxcode}
{\par\begin{list}{}{
\setlength{\rightmargin}{\leftmargin}
\setlength{\listparindent}{0pt}
\raggedright
\setlength{\itemsep}{0pt}
\setlength{\parsep}{0pt}
\normalfont\ttfamily}%
 \item[]}
{\end{list}}
  \theoremstyle{plain}
  \newtheorem{prop}[thm]{\protect\propositionname}
  \theoremstyle{remark}
  \newtheorem{rem}[thm]{\protect\remarkname}
  \theoremstyle{plain}
  \newtheorem{lem}[thm]{\protect\lemmaname}
  \theoremstyle{plain}
  \newtheorem{cor}[thm]{\protect\corollaryname}
  \providecommand{\corollaryname}{Corollary}
  \providecommand{\examplename}{Example}
  \providecommand{\lemmaname}{Lemma}
  \providecommand{\propositionname}{Proposition}
  \providecommand{\remarkname}{Remark}
\providecommand{\theoremname}{Theorem}
\begin{document}
\author{Alvaro Liendo}
\address{Instituto de Matem\'atica y F\'\i sica, Universidad de Talca, Casilla 721, Talca, Chile}
\email{aliendo@inst-mat.utalca.cl}

\author{Charlie Petitjean}   
\address{Instituto de Matem\'atica y F\'\i sica, Universidad de Talca, Casilla 721, Talca, Chile}
\email{petitjean.charlie@gmail.com}

\subjclass[2000]{14J17; 14R20; 14M25.}

\thanks{This research was partially supported by projects Fondecyt regular
1160864 and Fondecyt postdoctorado 3160005.}

\title{smooth varieties with torus actions}
\begin{abstract}
In this paper we provide a characterization of smooth algebraic varieties
endowed with a faithful algebraic torus action in terms of a combinatorial
description given by Altmann and Hausen. Our main result is that such
a variety $X$ is smooth if and only if it is locally isomorphic in
the étale topology to the affine space endowed with a linear torus
action. Furthermore, this is the case if and only if the combinatorial
data describing $X$ is locally isomorphic in the étale topology to
the combinatorial data describing affine space endowed with a linear
torus action. Finally, we provide an effective method to check the
smoothness of a $\mathbb{G}_{m}$-threefold in terms of the combinatorial
data. 
\end{abstract}

\maketitle

\section*{Introduction}

Let $\mathrm{k}$ be an algebraically closed field of characteristic
zero and let $\mathbb{T}$ be the algebraic torus $\mathbb{T}=(\mathbb{G}_{m})^{k}$
of dimension $k$ where $\mathbb{G}_{m}$ is the multiplicative group
of the field $(\mathrm{k^{*}},\cdot)$. The variety $\mathbb{T}$
has a natural structure of algebraic group. We denote by $M$ its
character lattice and by $N$ its 1-parameter subgroup lattice. In
this paper a variety denotes a integral separated scheme of finite
type. A $\mathbb{T}$-variety is a normal variety $X$ endowed with
a faithful action of $\mathbb{T}$ acting on $X$ by regular automorphisms.
The assumption that the $\mathbb{T}$-action on $X$ is faithful is
not a restriction since given any regular $\mathbb{T}$-action $\alpha$,
the kernel $\ker\alpha$ is a normal algebraic subgroup of $\mathbb{T}$
and $\mathbb{T}/(\ker\alpha)$ is again an algebraic torus acting
faithfully on $X$.

The complexity of a $\mathbb{T}$-variety is the codimension of the
generic orbit. Furthermore, since the action is assumed to be faithful,
the complexity of $X$ is given by $\dim X-\dim\mathbb{T}$. The best
known example of $\mathbb{T}$-varieties are those of complexity zero,
i.e., toric varieties. Toric varieties were first introduced by Demazure
in \cite{D2} as a tool to study subgroups of the Creomna group. Toric
varieties allow a combinatorial description in term of certain collections
of strongly convex polyhedral cones in the vector space $N_{\mathbb{Q}}=N\otimes_{\mathbb{Z}}\mathbb{Q}$
called fans, see for instance \cite{Od88,Fu93,C-L-S}. 

For higher complexity there is also a combinatorial description of
a $\mathbb{T}$-variety. We will use the language of p-divisors first
introduced by Altmann and Hausen in \cite{A-H} for the affine case
and generalized in \cite{A-H-S} to arbitrary $\mathbb{T}$-varieties.
This decription, that we call the A-H combinatorial description, generalizes
previously known partial cases such as \cite{KKMSD,D ,F-Z,Ti08}.
See \cite{A-I-P-S-V} for a detailed survey on the topic. 

Since the introduction of the A-H description, a lot of work has been
done in generalizing the known results from toric geometry to the
more general case of $\mathbb{T}$-varieties. On of the most basic
parts of the theory that are still open is the characterization of
smooth $\mathbb{T}$-varieties of complexity higher than one. In particular,
several clasification of singularities of $\mathbb{T}$-varieties
are given in \cite{Li-S}, but no smoothness criterion is given in
complexity higher than one. In this paper we achieve such a characterization
in arbitrary complexity. 

By Sumihiro's Theorem \cite{Su74}, every (normal) $\mathbb{T}$-variety
admits an affine cover by $\mathbb{T}$-invariant affine open sets.
Hence, to study smoothness, it is enough to consider affine $\mathbb{T}$-varieties.
The A-H description of an affine $\mathbb{T}$-variety $X$ consists
in a couple $(Y,\mathcal{D})$, where $Y$ is a normal semiprojective
variety that is a kind of quotient of $X$, usually called the Chow
quotient and $\mathcal{D}$ is a divisor on $Y$ whose coefficients
are not integers as usual, but polyhedra in $N_{\mathbb{Q}}$, see
Section~\ref{sec:AH} for details. 

It is well known that an affine toric variety $X$ of dimension $n$
is smooth if and only if it is equivariantly isomorphic to a $\mathbb{T}$-invariant
open set in $\mathbb{A}^{n}$ endowed with the standard $\mathbb{T}$-action
of complexity 0 by component-wise multiplication. Our main results
states that an affine $\mathbb{T}$-variety $X$ of dimension $n$
and of arbitrary complexity is smooth if and only if it is locally
isomorphic in the étale topology to the affine space $\mathbb{A}^{n}$
endowed with a linear $\mathbb{T}$-action, see Proposition~\ref{prop:luna-AH}.
Furthermore, $X$ is smooth if and only if the combinatorial data
$(Y,\mathcal{D})$ is locally isomorphic in the étale topology to
the combinatorial data $(Y',\mathcal{D}')$ of the affine space $\mathbb{A}^{n}$
endowed with a linear $\mathbb{T}$-action, see Theorem~\ref{Thm: smoothness}.
The main ingredient in our result is Luna's Slice Theorem \cite{L}. 

In order to effectively apply Theorem~\ref{Thm: smoothness}, it
is necessary to know the A-H description of $\mathbb{A}^{n}$ endowed
with all possible linear $\mathbb{T}$-action. The case of complexity
0 and 1 are well known, see Corollary~\ref{cor:cp1}. In Proposition~\ref{classification A3  },
we compute the combinatorial description of all the $\mathbb{G}_{m}$-action
on $\mathbb{A}^{3}$. We apply this proposition to give several examples
illustrating the behavior of the combinatorial data of smooth and
singular affine $\mathbb{G}_{m}$-varieties of complexity 2. 

We sate our results in the case of an algebraically closed field of
characteristic zero since the A-H description is given in that case.
Nevertheless, all our arguments are characteristic free. In the introduction
of \cite{A-H} it is stated that they expect their arguments to hold
in positive characteristic with the same proofs. Hence, our results
are also valid in positive characteristic provided the A-H description
is valid too. In particular, see \cite{Al10} for Luna's Slice Theorem
in positive characteristic and \cite{La15} for a different proof
of the A-H description in complexity one over arbitrary fields.

\section{Altmann-Hausen presentation of $\mathbb{T}$-varieties\label{sec:AH}}

In this section, we present a combinatorial description of affine
$\mathbb{T}$-varieties. We use here the description first introduced
by Altmann and Hausen in \cite{A-H}. This description, that we call
the A-H description, generalizes that of toric varieties \cite{C-L-S}
and many particular cases that were treared before, see \cite{KKMSD,D ,F-Z,Ti08}.
Furthermore, this was later generalized to the non affine case in
\cite{A-H-S}. See \cite{A-I-P-S-V} for a detailed survey on the
subject. 

Let $N\simeq\mathbb{Z}^{k}$ be a lattice of rank $k$, and let $M=\mathrm{Hom}(N,\mathbb{Z})$
be its dual lattice. We let $\mathbb{T}=\mathrm{Spec}(\mathrm{k}[M])$
be the algebraic torus whose character lattice is $M$ and whose 1-parameter
subgroup lattice is $N$. To both these lattices we associate rational
vector spaces $N_{\mathbb{Q}}:=N\otimes_{\mathbb{Z}}\mathbb{Q}$ and
$M_{\mathbb{Q}}:=M\otimes_{\mathbb{Z}}\mathbb{Q}$, respectively.
In all this paper, we let $\sigma\subseteq N_{\mathbb{Q}}$ be a strongly
convex polyhedral cone, i.e., the intersection of finitely many closed
linear half spaces in $N_{\mathbb{Q}}$ which does not contain any
line. Let $\sigma^{\vee}\subseteq M_{\mathbb{Q}}$ be its dual cone
and let $\sigma_{M}^{\vee}:=\sigma^{\vee}\cap M$ be the semi-group
of lattice point in $M$ contained in $\sigma^{\vee}$. Let $\Delta\subseteq N_{\mathbb{Q}}$
be a convex polyhedron, i.e., the intersection of finitely many closed
affine half spaces in $N_{\mathbb{Q}}$. Then $\Delta$ admits a decomposition
using the Minkowski sum as 
\[
\Delta:=\Pi+\sigma=\{v_{1}+v_{2}\mid v_{1}\in\Pi,v_{2}\in\sigma\},
\]
 where $\Pi$ is a polytope, i.e., the convex hull of finitely many
points in $N_{\mathbb{Q}}$ and $\sigma$ is a strongly convex polyhedral
cone. In this decomposition, the cone $\sigma$ is uniquely determined
by $\Delta$. The cone $\sigma$ is called the tail cone of $\Delta$
and the polyhedron $\Delta$ is said to be a $\sigma$-polyhedron.
Let $\mathrm{Pol_{\sigma}}(N_{\mathbb{Q}})$ be the set of all $\sigma$-polyhedra
in $N_{\mathbb{Q}}$. The set $\mathrm{Pol_{\sigma}}(N_{\mathbb{Q}})$
with Minkowski sum as operation forms a commutative semi-group with
neutral element $\sigma$. 

Let $Y$ be a semiprojective variety, i.e., $\varGamma(Y,\mathcal{O}_{Y})$
is finitely generated and $Y$ is projective over $Y_{0}=\mathrm{Spec(\varGamma(Y,\mathcal{O}_{Y}))}$.
A polyhedral divisor $\mathcal{D}$ on $Y$ is a formal sum $\mathcal{D}=\sum\Delta_{i}\cdot D_{i}$,
where $D_{i}$ are prime divisors on $Y$ and $\Delta_{i}$ are $\sigma$-polyhedron
with $\Delta_{i}=\sigma$ except for finitely many of them.

Let $\mathcal{D}$ be a polyhedral divisor on $Y$. We define the
evaluation divisor $\mathcal{D}(u)$ for every $u\in\sigma_{M}^{\vee}$
as the Weil $\mathbb{Q}$-divisor $Y$ given by 
\[
\mathcal{D}(u)=\sum\underset{v\in\Delta_{i}}{\mathrm{min}}\left\langle u,v\right\rangle D_{i}\quad\mbox{for all}\quad u\in\sigma_{M}^{\vee}.
\]

A polyhedral divisor on on $Y$ is called a p-divisor if the evaluation
divisor $\mathcal{D}(u)$ is semi-ample for each $u\in\sigma_{M}^{\vee}$
and big for each $u\in\mathrm{relint}(\sigma^{\vee})\cap M$. For
every p-divisor $\mathcal{D}$ on $Y$ we can associate a sheaf of
$\mathcal{O}_{Y}$-algebras 
\[
\mathcal{A}(Y,\mathcal{D})=\bigoplus_{u\in\sigma_{M}^{\vee}}\mathcal{O}_{Y}(\mathcal{D}(u))\cdot\chi^{u},
\]
 and its ring of global sections 
\[
A(Y,\mathcal{D})=\varGamma(Y,\mathcal{A}(Y,\mathcal{D}))=\bigoplus_{u\in\sigma_{M}^{\vee}}A_{u},\quad\mbox{where}\quad A_{u}=\varGamma(Y,\mathcal{O}_{Y}(\mathcal{D}(u)))\cdot\chi^{u}.
\]
To the $M$-graded algebra $A(Y,\mathcal{D})$, we associate the scheme
$X(Y,\mathcal{D})=\mathrm{Spec}(A(Y,\mathcal{D})).$ The main result
by Altmann and Hausen in \cite{A-H} is the following.
\begin{thm}
[Altmann-Hausen]For any p-divisor $\mathcal{D}$ on a normal semi-projective
variety $Y$, the scheme $X(Y,\mathcal{D})$ is a normal affine $\mathbb{T}$-variety
of dimension $\mathrm{dim}(Y)+\mathrm{dim}(\mathbb{T})$. Conversely
any normal affine $\mathbb{T}$-variety is isomorphic to an $X(Y,\mathcal{D})$
for some semi-projective variety $Y$ and some p-divisor $\mathcal{D}$
on $Y$.
\end{thm}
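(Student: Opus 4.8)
The plan is to prove the two implications separately, since they rely on rather different machinery.

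\emph{From a p-divisor to a $\mathbb{T}$-variety.} First I would equip $X(Y,\mathcal{D})=\mathrm{Spec}\,A$ with its $\mathbb{T}$-action, which is dual to the $M$-grading $A=\bigoplus_{u\in\sigma_{M}^{\vee}}A_{u}$. The key elementary observation is that, for $u,u'\in\sigma_{M}^{\vee}$, the concavity and positive homogeneity of $u\mapsto\min_{v\in\Delta_{i}}\langle u,v\rangle$ give the superadditivity $\mathcal{D}(u)+\mathcal{D}(u')\leq\mathcal{D}(u+u')$; hence if $f\chi^{u}\in A_{u}$ and $g\chi^{u'}\in A_{u'}$, then $\mathrm{div}(fg)+\mathcal{D}(u+u')\geq0$, so the multiplication $A_{u}\cdot A_{u'}\subseteq A_{u+u'}$ is well defined and is realized inside the field $\mathrm{k}(Y)\otimes_{\mathrm{k}}\mathrm{k}[M]$ of rational functions on $Y\times\mathbb{T}$. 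This embedding shows at once that $A$ is an integral domain with fraction field contained in $\mathrm{k}(Y\times\mathbb{T})$; the bigness of $\mathcal{D}(u)$ on $\mathrm{relint}(\sigma^{\vee})$ then forces equality of the fraction fields, whence $\dim X=\dim Y+\dim\mathbb{T}$. Normality is inherited from that of $Y$, since each $\mathcal{O}_{Y}(\mathcal{D}(u))$ is the reflexive sheaf of a Weil divisor and $A$ can be written as an intersection of discrete valuation rings. The genuinely hard step here is finite generation of $A$ as a $\mathrm{k}$-algebra, and this is exactly where the p-divisor hypotheses enter: semi-ampleness of each $\mathcal{D}(u)$ makes the associated section rings finitely generated, the semiprojectivity of $Y$ reduces the problem to the projective morphism $Y\to Y_{0}=\mathrm{Spec}\,\Gamma(Y,\mathcal{O}_{Y})$, and Gordan's lemma controls the monoid $\sigma_{M}^{\vee}$; combining these yields finite generation of the full $M$-graded algebra.

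\emph{From a $\mathbb{T}$-variety to a p-divisor.} For the converse, starting from a normal affine $\mathbb{T}$-variety $X=\mathrm{Spec}\,A$, the action produces the weight decomposition $A=\bigoplus_{u}A_{u}$. I would take the weight cone $\omega=\mathrm{cone}(\{u : A_{u}\neq0\})$, which is rational polyhedral by finite generation and full-dimensional by faithfulness, and set $\sigma=\omega^{\vee}$ as the prospective tail cone. The candidate base $Y$ is built as a common normal modification of the GIT-type quotients $\mathrm{Proj}\bigoplus_{m}A_{mu}$ attached to the various weights, that is, as a normalization of the Chow quotient of $X$ by $\mathbb{T}$, so that its function field is the field of $\mathbb{T}$-invariant rational functions on $X$. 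On this $Y$ one recovers the coefficients divisor by divisor: for each prime divisor $D_{i}\subseteq Y$ the function $u\mapsto\mathrm{coeff}_{D_{i}}\mathcal{D}(u)$, read off from orders of vanishing of homogeneous elements, is piecewise linear and concave, hence of the form $\min_{v\in\Delta_{i}}\langle u,v\rangle$ for a unique $\sigma$-polyhedron $\Delta_{i}$, with $\Delta_{i}=\sigma$ for all but finitely many $i$. Finally one checks that the resulting $\mathcal{D}=\sum\Delta_{i}\cdot D_{i}$ satisfies the semi-ampleness and bigness conditions and that $X(Y,\mathcal{D})$ is $\mathbb{T}$-equivariantly isomorphic to $X$.

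I expect the construction of $Y$ in the converse to be the principal obstacle: one must produce a single normal semiprojective variety on which every graded piece $A_{u}$ is simultaneously represented as the global sections of a Weil divisor, and then verify that the divisorial coefficients glue into honest polyhedra meeting the p-divisor axioms. In the forward direction the analogous crux is the finite-generation step, where semi-ampleness and bigness are indispensable.
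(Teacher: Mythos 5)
The paper does not prove this theorem: it is quoted verbatim as the main result of Altmann and Hausen \cite{A-H}, so there is no internal proof to compare against. Your sketch follows the same strategy as the original proof in \cite{A-H}: in the forward direction, the superadditivity $\mathcal{D}(u)+\mathcal{D}(u')\leq\mathcal{D}(u+u')$ coming from concavity of $u\mapsto\min_{v\in\Delta_i}\langle u,v\rangle$ is indeed what makes $A(Y,\mathcal{D})$ a ring inside $\mathrm{k}(Y)\otimes\mathrm{k}[M]$, and the roles you assign to semi-ampleness (finite generation of section rings), bigness (correct dimension/fraction field), and normality of $Y$ are the correct ones; in the converse, the weight cone, the inverse limit of the GIT quotients $\operatorname{Proj}\bigoplus_m A_{mu}$ normalized to give the Chow quotient, and the recovery of each $\Delta_i$ from the concave piecewise-linear function $u\mapsto\operatorname{coeff}_{D_i}\mathcal{D}(u)$ is exactly the construction the paper itself recalls in Section~\ref{sec:AH} via the quasifan $\Lambda$ and diagram (\ref{eq:AH-diagram}). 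You correctly flag the two genuinely hard points (finite generation in one direction, existence of a single base $Y$ carrying all the divisors and verification of the p-divisor axioms in the other); as written these remain outlines rather than proofs, so your text is an accurate roadmap of \cite{A-H} rather than a self-contained argument, which is the appropriate level of detail for a result the paper imports as a black box.
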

The semi-projective variety $Y$ serving as base for the combinatorial
data in the above theorem is not unique. Indeed, in the following
example we give three different presentations for the same affine
$\mathbb{T}$-variety. Computations can be carried out following the
method described in \cite[Section~11]{A-H}
\begin{example}
Let $\mathbb{A}^{3}=\mathrm{Spec}(\mathrm{k}[x,y,z])$ endowed with
the linear $\mathbb{G}_{m}$-action given by $\lambda\cdot(x,y,z)\rightarrow(\lambda x,\lambda^{-1}y,\lambda z)$.
We say that the weight matrix is $\begin{pmatrix}1 & -1 & 1\end{pmatrix}^{t}$.
In this case, the lattices $M$ and $N$ are isomorphic to $\mathbb{Z}$
and the tail cone must be $\sigma=\left\{ 0\right\} $. then $\mathbb{A}^{3}$
is equivariantly isomorphic to\end{example}
\begin{enumerate}
\item [$(i)$] $X(Y_{1},\mathcal{D}_{1})$, where $Y_{1}$ is the is the
blow-up of $\mathbb{A}^{2}$ at the origin and $\mathcal{D}_{1}=\left[-1,0\right]\cdot E$,
with $E$ the exceptional divisor of the blow up.
\item [$(ii)$] $X(Y_{2},\mathcal{D}_{2})$, where $Y_{2}$ is the is the
blow-up of $\mathbb{A}^{2}$ at the origin and $\mathcal{D}_{2}=\left[0,1\right]\cdot E+D$,
with $E$ the exceptional divisor of the blow up and $D$ the strict
transform of any curve passing with multiplicity one at the origin.
\item [$(iii)$]$X(Y_{3},\mathcal{D}_{3})$, where $\psi:Y_{3}\rightarrow Y_{1}$
is any projective birational morphism with $Y_{3}$ normal and $\mathcal{D}_{3}=\psi^{*}(\mathcal{D}_{1})$
is the total transform of $\mathcal{D}_{1}$, see \cite[lemma 2.1]{Li-S}.
\end{enumerate}
Amongst all the possible bases $Y$ for the combinatorial description
of a $\mathbb{T}$-variety, there is one of particular interest for
us. It corresponds to the definition of minimal p-divisor given in
\cite[Definition~8.7]{A-H}. A p-divisor $\mathcal{D}$ on a normal
semi-projective variety $Y$ is minimal if given a projective birational
morphism $\psi:Y\rightarrow Y'$ such that $\mathcal{D}=\psi^{*}(\mathcal{D}')$,
then we have $\psi$ is an isomorphism. Given an affine $\mathbb{T}$-variety
$X$, the semi-projective variety $Y$ where a minimal p-divisor describing
$X$ lives is called the Chow quotient for the $\mathbb{T}$-action.
In the previous example, the first two descriptions are minimal while
the last one is only minimal if $\psi$ is an isomorphism. 

We need to recall the construction of the Chow quotient of an affine
$\mathbb{T}$-variety $X$ in \cite{A-H}. For every $u\in M$ we
consider the set of semistable points
\[
X^{ss}(u)=\left\{ x\in X\mid\mbox{there exists }n\in\mathbb{Z}_{\geq0}\mbox{ and }f\in A_{nu}\mbox{ such that }f(x)\neq0\right\} .
\]
The set is a $\mathbb{T}$-invariant open subset of $X$ admitting
a good $\mathbb{T}$-quotient to
\[
Y_{u}=X^{ss}(u)/\!/\mathbb{T}=\mathrm{\operatorname{Proj}}\underset{n\in\mathbb{Z}_{\geq0}}{\bigoplus}A_{nu}.
\]
There exists a quasifan $\Lambda\in M_{\mathbb{Q}}$ generated by
a finite collection of (not necessarily strongly convex) cones $\lambda$
such that for any $u$ and $u'$ in the relative interior of the same
cone $\lambda\in\Lambda$, we obtain the same set of semi-stable points,
i.e., $X^{ss}(u)=X^{ss}(u')$. Thereby, we can index the open sets
semi-stable points $X^{ss}(u)$ using $\lambda$ so that $X^{ss}(u)=W_{\lambda}$
for any $u\in\mathrm{relint}(\lambda)$. Furthermore, if $\gamma$
is a face of $\lambda$, $W_{\lambda}$ is an open subset of $W_{\gamma}$.
Let $W=\underset{\lambda\in\Lambda}{\cap}W_{\lambda}$. The quotient
maps 
\[
q_{\lambda}:W_{\lambda}\rightarrow W_{\lambda}/\!/\mathbb{T}=\operatorname{Proj}\underset{n\in\mathbb{Z}_{\geq0}}{\bigoplus}A_{nu}
\]
for any $u$ in the relative interior of $\lambda$ form an inverse
system indexed by the cones in the fan $\Lambda$. We let $q:W\longrightarrow Y'=\underset{\leftarrow}{\lim}Y_{\lambda}$
be the inverse limit of this system. The semi-projective variety $Y$
is then obtained by taking the normalization of the closure of the
image of $W$ by $q$ in $Y'$. In the following commutative diagram
we summarize all the morphisms involved. We will refer to them by
these names in the text. The morphism $\overline{q}:W\rightarrow Y$
comes from the universal property of the normalization. 
\begin{lyxcode}
\begin{equation}
\xymatrix{W\ar[r]\ar[d]_{\overline{q}} & W_{\lambda}\ar[r]\ar[d]_{q_{\lambda}} & W_{\gamma}\ar[r]\ar[d]_{q_{\gamma}} & X\ar[dd]_{q_{0}}\\
Y\ar[drrr]_{\rho}\ar[r] & Y_{\lambda}\ar[drr]\ar[r] & Y_{\gamma}\ar[dr]\\
 &  &  & Y_{0}=\mathrm{Spec}(A_{0})
}
\label{eq:AH-diagram}
\end{equation}

\end{lyxcode}

\section{Smoothness criteria}

Our main result in this paper is a characterization of smooth $\mathbb{T}$-varieties
in terms of the A-H combinatorial data. This will follow as an application
of Luna's Slice Theorem \cite{L}. We will first recall this theorem
and the required notation. Throughout this section $G$ will be used
to denote a reductive algebraic group. The Slice Theorem describes
the local structure in the étale topology of algebraic varieties endowed
with a $G$-action. For details and proofs see \cite{L,Dr}. We will
follow the presentation in \cite[Appendix to Ch.~1\S D]{GIT}, see
also \cite{La2}. For details on étale morphisms and the étale topology,
see \cite{EGA,Stacks}. 

Let $X$, $X'$ be algebraic varieties, $f:X\rightarrow X'$ be a
morphism and $x\in X$. The morphism $f$ is called étale at $x$
if it is flat and unramified at $x$. We say that $f$ is étale if
it is étale at every point $x\in X$. For every $x\in X$, an étale
neighborhood $\psi$ of $x$ is an étale morphism $\psi:Z\rightarrow X$
from some algebraic variety $Z$ such that $x\in\psi(Z)$. We say
that two varieties $X$ and $X'$ are locally isomorphic in the étale
topology at the points $x\in X$ and $x'\in X'$ if they have a common
étale neighborhood, i.e., if there exists a variety $Z$ and a couple
of étale morphisms $\psi:Z\rightarrow X$ and $\psi':Z\rightarrow X'$
such that $x\in\psi(Z)$ and $x'\in\psi'(Z)$.

If $X$ is a $G$-variety, we say that the neighborhood $\psi$ is
$G$-invariant if and $Z$ is also a $G$-variety and $\psi$ is equivariant.
Furthermore, if both $X$ and $X'$ are $G$-varieties, we say that
$X$ and $X'$ are equivariantly locally isomorphic in the étale topology
at the points $x$ and $x'$ if they have a common $G$-invariant
étale neighborhood. 

For the Slice Theorem we need a stronger equivariant notion of étale
morphisms. Let $\phi:X\rightarrow X'$ be a $G$-equivariant morphism
of affine $G$-varieties. The morphism $\phi$ is called strongly
étale if
\begin{enumerate}
\item [(i)]$\phi$ is étale and the induced morphism $\phi_{/\!/G}:X/\!/G\rightarrow X'/\!/G$
is étale.
\item [(ii)] $\phi$ and the quotient morphism $\pi:X\rightarrow X/\!/G$
induce a $G$-isomorphism between $X$ and the fibered product $X'\times_{X'/\!/G}X/\!/G$.
\end{enumerate}
Strongly étale morphism play the role of ``local isomorphism'' in
the Slice Theorem. Let now $H$ be a closed subgroup of $G$ and $V$
an $H$-variety. The twisted product $G\star^{H}V$ is the algebraic
quotient of $G\times V$ with respect to the $H$-action given by
\[
H\times(G\times V)\rightarrow G\times V,\qquad(h,(g,v))\mapsto h\cdot(g,v)=(gh^{-1},hv)\,.
\]
For a smooth point $x\in X$, we denote by $T_{x}X$ the fiber at
$x$ of the tangent bundle of $X$. Furthermore, for a closed orbit
$G\cdot x$, we denote by $N_{x}$ the fiber at $x$ of the normal
bundle to the orbit $G\cdot x$. 
\begin{thm}
[Luna's Slice Theorem]\label{luna} Let $X$ be an smooth affine
$G$-variety. Let $x\in X$ be a point such that the orbit $G\cdot x$
is closed and fix a linearization of the trivial bundle on $X$ such
that $G\cdot x\subset X^{ss}$. Then, there exists an affine smooth
$G_{x}$-invariant locally closed subvariety $V_{x}$ of $X^{ss}$
(a slice to $G\cdot x$) such that we have the following commutative
diagram 
\begin{lyxcode}
\[
\xymatrix{G\star^{G_{x}}T_{x}V_{x}\ar[d] & G\star^{G_{x}}V_{x}\ar[r]\sp(0.55){st.\,\acute{e}t.}\ar[l]\sb(0.45){st.\,\acute{e}t.}\ar[d] & X^{ss}\ar[d]\\
N_{x}/\!/G_{x} & (G\star^{G_{x}}V_{x})/\!/G\ar[r]\sp(0.55){\acute{e}t.}\ar[l]\sb(0.5){\acute{e}t.} & X^{ss}/\!/G
}
\]

\end{lyxcode}
\end{thm}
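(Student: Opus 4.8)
The plan is to reduce the statement to a purely linear model through an equivariant embedding, and then to reassemble the local picture using two structural inputs for reductive group actions: Matsushima's criterion and what is usually called \emph{Luna's fundamental lemma}. Throughout I will use that, since $G$ is reductive, every affine $G$-variety $Z$ admits a good quotient $\pi\colon Z\to Z/\!/G=\operatorname{Spec}\varGamma(Z,\mathcal{O}_Z)^G$ which is affine and surjective and separates disjoint closed invariant sets; in particular each fiber of $\pi$ contains a unique closed orbit. These separation properties are the engine behind every descent to quotients below.

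First I would establish that the stabilizer $G_x$ is reductive. Because $G\cdot x$ is closed, it is an affine homogeneous space $G/G_x$, and Matsushima's criterion then forces $G_x$ to be reductive; this is what permits complete reducibility for $G_x$-representations in the construction of the slice. Next, since $X$ is affine and $G$ reductive, I would embed $X$ equivariantly as a closed $G$-subvariety of a finite-dimensional $G$-module $W$, obtained from a finite-dimensional $G$-submodule of $\varGamma(X,\mathcal{O}_X)$ generating the algebra. Working inside $W$, the tangent space $T_xX$ is a $G_x$-submodule of $T_xW\cong W$, and complete reducibility yields a $G_x$-stable splitting $T_xX=T_x(G\cdot x)\oplus N_x$, with $N_x$ the fiber of the normal bundle. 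I would then produce a $G_x$-invariant locally closed smooth affine subvariety $V_x\subseteq X$ through $x$ transversal to the orbit, i.e. with $T_xV_x=N_x$: concretely, intersect $X$ with a $G_x$-stable affine subspace of $W$ through $x$ complementary to $T_x(G\cdot x)$, and shrink to a smooth $G_x$-invariant affine piece.

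With $V_x$ in hand, the multiplication map $\Phi\colon G\star^{G_x}V_x\to X$, $[g,v]\mapsto g\cdot v$, is $G$-equivariant and, by the transversality $T_x(G\cdot x)\oplus N_x=T_xX$, induces an isomorphism on tangent spaces at the class of $(e,x)$; hence $\Phi$ is \'etale there. The main work, and the step I expect to be the real obstacle, is upgrading this pointwise \'etaleness to a \emph{strongly} \'etale map on a saturated neighborhood. Here I would invoke the fundamental lemma: a $G$-morphism of affine $G$-varieties that is \'etale at a point $x$ whose orbit is closed and that maps $G\cdot x$ isomorphically onto a closed orbit becomes strongly \'etale after restriction to a suitable $G$-saturated affine open neighborhood. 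The delicate points in proving this lemma are that \'etaleness must descend to the good quotients (controlled via the unique closed orbit in each fiber together with flatness and the local structure of $\pi$) and that the square comparing the source with $Z'\times_{Z'/\!/G}Z/\!/G$ must be shown Cartesian; both rely essentially on reductivity through the separation properties of good quotients. Shrinking $V_x$ as dictated by the lemma yields the strongly \'etale arrow $G\star^{G_x}V_x\to X^{ss}$ of the top row.

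Finally, for the left-hand column I would linearize $V_x$ about the fixed point $x$ of the reductive group $G_x$. The identification $T_xV_x=N_x$ together with a $G_x$-equivariant projection $W\to N_x$ supplies a $G_x$-equivariant morphism $V_x\to N_x$ that is \'etale at $x$; applying the fundamental lemma now to $G_x$ (with the closed one-point orbit $\{x\}$) makes it strongly \'etale after a further shrinking, hence induces an \'etale map $V_x/\!/G_x\to N_x/\!/G_x$. Since forming twisted products is compatible with good quotients, one has $(G\star^{G_x}V_x)/\!/G\cong V_x/\!/G_x$, which furnishes both the strongly \'etale map $G\star^{G_x}V_x\to G\star^{G_x}N_x=G\star^{G_x}T_xV_x$ and, on quotients, the \'etale map $(G\star^{G_x}V_x)/\!/G\to N_x/\!/G_x$. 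Assembling the two columns produces the asserted commutative diagram, completing the sketch.
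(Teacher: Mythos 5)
This theorem is not proved in the paper: it is quoted as a known result, with the proof deferred to the cited sources (Luna's \emph{Slices \'etales}, Dr\'ezet's notes, and the appendix to Ch.~1 of GIT). So there is no in-paper argument to compare against; what can be assessed is whether your sketch is a faithful and correct reconstruction of the standard proof, and it essentially is. The skeleton --- Matsushima's criterion to get $G_x$ reductive, an equivariant closed embedding $X\hookrightarrow W$ into a $G$-module, a $G_x$-stable splitting $T_xX=T_x(G\cdot x)\oplus N_x$, the slice $V_x=X\cap(x+S)$ for a $G_x$-stable complement $S$ of $T_x(G\cdot x)$ in $W$, \'etaleness of $[g,v]\mapsto g\cdot v$ at $[e,x]$, and two applications of Luna's fundamental lemma (one for $G$ on the multiplication map, one for $G_x$ on the linearization $V_x\to T_xV_x=N_x$) together with $(G\star^{G_x}V_x)/\!/G\cong V_x/\!/G_x$ --- is exactly the argument in the references. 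The one honest caveat is that you have relocated all of the real difficulty into the fundamental lemma, which you state but do not prove; that lemma (passing from \'etale at a point of a closed orbit to strongly \'etale on a $G$-saturated affine neighborhood, including descent of \'etaleness to the good quotients and the Cartesian square) is where most of the work in Luna's paper lives, so as written your text is an accurate roadmap rather than a complete proof. Two smaller points worth tightening if you expand this: you should say explicitly that after choosing the linearization one may replace $X$ by the $G$-saturated affine open set $X^{ss}$, so that the good-quotient formalism applies verbatim; and when you shrink $V_x$ you must shrink it $G_x$-saturatedly (which is possible because $x$ is a $G_x$-fixed point) so that the quotient statements survive the shrinking.
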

Our main result will follow from the following proposition that translates
Luna's Slice Theorem into the language of the A-H presentation. Let
$X=X(Y,\mathcal{D})$. Recall that the morphism $\rho:Y\rightarrow Y_{0}$
in the A-H presentation is the natural morphism from the Chow quotient
$Y$ used in the presentation and the algebraic quotient $Y_{0}=X/\!/\mathbb{T}$. 
\begin{prop}
\label{prop:luna-AH} Let $X=X(Y,\mathcal{D})$ be an affine $\mathbb{T}$-variety
of dimension $n$. Assume that $Y$ is minimal. Then, $X$ is smooth
if and only if for every point $y$ of $Y_{0}$ there exists a neighborhood
$\mathcal{U}\subseteq Y_{0}$, an algebraic variety $Z$, a smooth
toric variety $X'$ of dimension $n$, and strongly étale morphisms
$a:Z\rightarrow X(\rho^{-1}(\mathcal{U}),\mathcal{D}|_{\rho^{-1}(\mathcal{U})})$
and $b:Z\rightarrow X'$ where and $\mathbb{T}$ acts in $X'$ via
a subtorus of the big torus action. In particular, every smooth point
in a $\mathbb{T}$-variety is $\mathbb{T}$-equivariantly locally
isomorphic in the étale topology to a point in the affine space endowed
with a linear $\mathbb{T}$-action.\end{prop}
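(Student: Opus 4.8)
The plan is to apply Luna's Slice Theorem (Theorem~\ref{luna}) to the smooth affine $\mathbb{T}$-variety $X$ and then repackage the resulting local structure in terms of the A-H data. First I would fix an arbitrary point $y \in Y_0 = X/\!/\mathbb{T}$. Since $Y_0$ is the algebraic quotient, the fiber $q_0^{-1}(y)$ contains a unique closed $\mathbb{T}$-orbit $\mathbb{T}\cdot x$; this is the standard fact that each fiber of a good quotient by a reductive group contains exactly one closed orbit. The orbit $\mathbb{T}\cdot x$ is then closed in $X$, so Luna's Slice Theorem applies at $x$ (taking $G = \mathbb{T}$ and the trivial linearization, so that $X^{ss} = X$). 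This produces a slice $V_x$, a smaller reductive isotropy group $\mathbb{T}_x = G_x$, and the two strongly \'etale morphisms fitting into the commutative diagram, together with the \'etale maps on the quotient level.

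Next I would identify the three pieces of the Luna diagram with pieces of the A-H presentation. The upper-right term $X^{ss}=X$ maps down to $X^{ss}/\!/\mathbb{T} = Y_0$; the middle term $G\star^{G_x}V_x$ is the common \'etale neighborhood $Z$; and the left-hand term $G\star^{G_x}T_xV_x$ is the candidate for the linear model $X'$. The key point is that $G\star^{\mathbb{T}_x}T_xV_x$, the twisted product of the torus with the tangent representation of the finite-or-subtorus isotropy $\mathbb{T}_x$ on the tangent space $T_xV_x$, is precisely an affine space $\mathbb{A}^n$ carrying a linear $\mathbb{T}$-action: the $\mathbb{T}_x$-representation on $T_xV_x$ diagonalizes into characters, and inducing up along $\mathbb{T}_x \subseteq \mathbb{T}$ yields $\mathbb{A}^n$ with $\mathbb{T}$ acting through a subtorus of the big diagonal torus, i.e. a smooth toric variety $X'$ with the required linear action. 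I would then take $\mathcal{U}$ to be an affine neighborhood of $y$ over which the \'etale map $Z/\!/\mathbb{T} \to Y_0$ is defined, and restrict $Z$ over this $\mathcal{U}$. Because the A-H construction is compatible with restriction to $\mathbb{T}$-invariant affine opens over $Y_0$, the preimage $q_0^{-1}(\rho(\mathcal{U}))$ is exactly $X(\rho^{-1}(\mathcal{U}), \mathcal{D}|_{\rho^{-1}(\mathcal{U})})$, which gives the object appearing in the statement. Setting $a$ to be the induced strongly \'etale map $Z \to X(\rho^{-1}(\mathcal{U}), \mathcal{D}|_{\rho^{-1}(\mathcal{U})})$ and $b$ the other strongly \'etale map $Z \to X'$ completes the forward direction.

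For the converse, suppose each $y \in Y_0$ admits such data. Strongly \'etale morphisms are in particular \'etale, hence preserve smoothness in both directions: since $X'$ is smooth and $b: Z \to X'$ is \'etale, $Z$ is smooth; and since $a: Z \to X(\rho^{-1}(\mathcal{U}), \mathcal{D}|_{\rho^{-1}(\mathcal{U})})$ is \'etale and surjective onto its image, the image is smooth. As $y$ ranges over $Y_0$ these images cover $X$, so $X$ is smooth. The final sentence of the statement is then immediate: taking $Z$ with its two \'etale maps to $X$ and to $X' = \mathbb{A}^n$ exhibits the common equivariant \'etale neighborhood asserted, once one checks the maps are $\mathbb{T}$-equivariant, which holds because everything in Luna's diagram is built from $G$-equivariant constructions.

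The main obstacle I anticipate is the precise identification of $G\star^{\mathbb{T}_x}T_xV_x$ with affine space carrying a \emph{linear} $\mathbb{T}$-action of the stated form, and more subtly, the bookkeeping showing that restriction over the \'etale-local neighborhood $\mathcal{U}$ matches the A-H datum $X(\rho^{-1}(\mathcal{U}), \mathcal{D}|_{\rho^{-1}(\mathcal{U})})$ exactly. The twisted product computation requires understanding how the isotropy group $\mathbb{T}_x$ (a diagonalizable subgroup of $\mathbb{T}$) sits inside $\mathbb{T}$ and how its representation on the slice's tangent space induces up to a representation of the full torus; the tail cone being trivial in low rank, as in the examples, suggests these isotropy subtori are the relevant combinatorial shadow. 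Verifying the compatibility of the A-H functor with the quotient-level \'etale base change $\mathcal{U} \hookrightarrow Y_0$ is where minimality of $Y$ is used, ensuring $\rho^{-1}(\mathcal{U})$ is the correct base for the restricted p-divisor rather than some non-minimal model.
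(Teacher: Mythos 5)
Your proposal is correct and follows essentially the same route as the paper: apply Luna's Slice Theorem at a point $x$ of the unique closed orbit in $q_{0}^{-1}(y)$, take $Z=\mathbb{T}\star^{\mathbb{T}_{x}}V_{x}$ and $X'=\mathbb{T}\star^{\mathbb{T}_{x}}T_{x}V_{x}$, identify $X^{ss}$ with $X(\rho^{-1}(\mathcal{U}),\mathcal{D}|_{\rho^{-1}(\mathcal{U})})$, and use that \'etale morphisms preserve smoothness for the converse. One small correction: the twisted product $\mathbb{T}\star^{\mathbb{T}_{x}}T_{x}V_{x}$ is \emph{not} literally $\mathbb{A}^{n}$ when $\mathbb{T}_{x}\subsetneq\mathbb{T}$ (it retains a torus factor $\mathbb{T}/\mathbb{T}_{x}$, hence has nonconstant units); it is a smooth affine toric variety on which $\mathbb{T}$ acts via a subtorus of the big torus, and the paper obtains the ``in particular'' clause by equivariantly embedding it as an open subset of $\mathbb{A}^{n}$ and composing with that open (hence \'etale) immersion --- exactly the point you flagged as needing care.
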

\begin{proof}
Since $X$ is covered by the smooth $\mathbb{T}$-stable open sets
$X(\rho^{-1}(\mathcal{U}),\mathcal{D}|_{\rho^{-1}(\mathcal{U})})$,
we obtain that every point $x\in X$ is locally isomorphic in the
étale topology to a open set in $X'$. Since smoothness is preserved
by étale morphisms, we have that $X$ is smooth. This shows the ``if''
of the first assertion. 

Let now $q_{0}:X\rightarrow Y_{0}=X/\!/\mathbb{T}$ be the algebraic
quotient. Then for every $y\in Y_{0}$, $q_{0}^{-1}(y)$ contains
a unique closed orbit that we will denote $C_{y}$. Choose a linearization
of the trivial bundle such that $C_{y}\subset X^{ss}$. By the top
row in the commutative diagram in Theorem~\ref{luna}, for any point
$x\in C_{y}$ we have a closed smooth $\mathbb{T}_{x}$-stable subvariety
$V_{x}$ of $X$ and strongly étale morphisms 
\[
\xymatrix{X':=\mathbb{T}\star^{\mathbb{T}_{x}}T_{x}V_{x} & Z:=\mathbb{T}\star^{\mathbb{T}_{x}}V_{x}\ar[l]\sb(0.45){b}\ar[r]\sp(0.65){a} & X^{ss}.}
\]

Hence, we can take $\mathcal{U}$ as the image of $X^{ss}/\!/\mathbb{T}$
in $Y$ by $\overline{q}$ so that $X(\rho^{-1}(\mathcal{U}),\mathcal{D}|_{\rho^{-1}(\mathcal{U})})=X^{ss}$.
Since the $\mathbb{T}_{x}$-action on $\mathbb{T}$ and on $T_{x}V_{x}$
are linear, we have that $X'$ is a smooth toric variety where $\mathbb{T}$
acts as a subtorus of the big torus. This proves the ``only if''
part of the first assertion.

For the last statement, It is well known that every smooth affine
toric variety of dimension $n$ can be embedded as an open set in
$c:X'\hookrightarrow\mathbb{A}^{n}$ equivariantly with respect to
the big torus action. Since an open embedding is étale, the compositions
$c\circ b$ and $a$ are the required étale neighborhoods for every
$x\in X^{ss}\subseteq X$.\end{proof}
\begin{rem}
The statement that every smooth point in a $\mathbb{T}$-variety is
$\mathbb{T}$-equivariantly locally isomorphic in the étale topology
to a point in the affine space endowed with a linear $\mathbb{T}$-action
is generalization of the classical result that every smooth point
in a variety is locally isomorphic in the étale topology to a point
in the affine space. 
\end{rem}
For the proof of our main theorem, we need the following version of
\cite[Theorem~8.8]{A-H}.
\begin{lem}
\label{lem:AH-Th88}Let $X=X(Y,\mathcal{D})$ and $X'=X(Y',\mathcal{D}')$
be affine $\mathbb{T}$-varieties, where $\mathcal{D}$ and $\mathcal{D}'$
are minimal p-divisors on normal semi-projective varieties $Y$ and
$Y'$, respectively. If $\psi:X'\rightarrow X$ is a strongly étale
morphism, then, there exists an étale surjective morphism $\varphi:Y'\rightarrow Y$
such that $X'$ is equivariantly isomorphic to $X(Y',\varphi^{*}(\mathcal{D}))$. 

Conversely, if $\varphi:Y'\rightarrow Y$ is an étale surjective morphism
and $\mathcal{D}$ is a minimal p-divisor on $Y$, then $\mathcal{D}'=\varphi^{*}(\mathcal{D})$
is minimal and there is a strongly étale morphism. $\psi:X(Y',\mathcal{D}')\rightarrow X(Y,\mathcal{D})$.\end{lem}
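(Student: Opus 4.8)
The plan is to prove the two directions of Lemma~\ref{lem:AH-Th88} by relating strongly \'etale morphisms of the total spaces to \'etale morphisms of the Chow quotients, using the functoriality of the A-H construction together with the minimality hypothesis. For the first direction, I would start from a strongly \'etale morphism $\psi:X'\rightarrow X$. By condition (i) in the definition of strongly \'etale, the induced morphism on algebraic quotients $\psi_{/\!/\mathbb{T}}:Y_0'\rightarrow Y_0$ is \'etale, and by condition (ii) the square
\[
\xymatrix{X'\ar[r]^{\psi}\ar[d] & X\ar[d]\\ Y_0'\ar[r]^{\psi_{/\!/\mathbb{T}}} & Y_0}
\]
is Cartesian, so $X'\cong X\times_{Y_0}Y_0'$ as $\mathbb{T}$-varieties. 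The first key step is to show that this base change on the quotients lifts to the Chow quotients, i.e.\ that the Chow quotient $Y'$ of $X'$ is obtained from $Y$ by the base change $\varphi:Y'\rightarrow Y$ fitting over $\psi_{/\!/\mathbb{T}}$, and that $\varphi$ is itself \'etale and surjective. The natural candidate is $Y':=Y\times_{Y_0}Y_0'$ normalized if necessary; since an \'etale base change of a normal variety is normal, and since semistability and the $\mathrm{Proj}$ construction defining the $Y_u$ commute with \'etale base change, the inverse system defining $Y'$ should be the pullback of the one defining $Y$.

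The second, and more delicate, key step is to identify the graded algebra of $X'$ with $A(Y',\varphi^*\mathcal{D})$. Here I would use the flat base change theorem for cohomology: because $\varphi$ is \'etale, for each $u\in\sigma_M^\vee$ one has $\Gamma(Y',\mathcal{O}_{Y'}(\varphi^*\mathcal{D}(u)))\cong\Gamma(Y,\mathcal{O}_Y(\mathcal{D}(u)))\otimes_{A_0}A_0'$, and summing over $u$ yields $A(Y',\varphi^*\mathcal{D})\cong A(Y,\mathcal{D})\otimes_{A_0}A_0'=\Gamma(X\times_{Y_0}Y_0')$, which is exactly the graded ring of $X'$ by condition (ii). The compatibility of $\varphi^*\mathcal{D}(u)$ with the evaluation map $\mathcal{D}'(u)$ is the pullback formula $\varphi^*\mathcal{D}(u)=(\varphi^*\mathcal{D})(u)$, which holds because evaluation is defined coefficient-wise by the same minimization over the (unchanged) polyhedra $\Delta_i$. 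This identification gives the desired equivariant isomorphism $X'\cong X(Y',\varphi^*\mathcal{D})$.

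For the converse, I would run the same machinery in reverse: given an \'etale surjective $\varphi:Y'\rightarrow Y$ and a minimal $\mathcal{D}$, set $\mathcal{D}'=\varphi^*\mathcal{D}$ and define $\psi:X(Y',\mathcal{D}')\rightarrow X(Y,\mathcal{D})$ as the morphism induced on $\mathrm{Spec}$ by the graded ring map coming from flat base change. That $\psi$ is strongly \'etale follows by checking conditions (i) and (ii): the quotient morphism is $\varphi_0:Y_0'\rightarrow Y_0$, which is \'etale as an induced map of affine quotients, and the Cartesian property is precisely the flat base change identity $A(Y',\varphi^*\mathcal{D})\cong A(Y,\mathcal{D})\otimes_{A_0}A_0'$ read the other way. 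The remaining point is that $\varphi^*\mathcal{D}$ is again \emph{minimal}: I expect this to be the main obstacle, since minimality is a non-trivial condition (no further projective birational contraction exists), and one must rule out that pulling back along $\varphi$ creates contractible structure. I would argue that if $\psi':Y'\rightarrow Y''$ is a projective birational morphism with $\varphi^*\mathcal{D}=\psi'^*\mathcal{D}''$, then, using that $\varphi$ is \'etale and hence the contraction descends, $Y$ would admit a corresponding contraction contradicting minimality of $\mathcal{D}$; making this descent argument rigorous is the crux, and I would appeal to the characterization of minimality in \cite[Definition~8.7]{A-H} via the structure of the inverse limit defining the Chow quotient. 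Finally, surjectivity of $\varphi$ guarantees that $\psi$ itself is surjective, completing the equivalence.
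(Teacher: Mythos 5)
Your first direction is a workable, essentially self-contained alternative to the paper's argument: the paper simply quotes \cite[Theorem~8.8]{A-H}, using minimality to force the twisting datum $\kappa$ there to be the identity, whereas you reconstruct the Chow quotient of $X'$ as the base change $Y\times_{Y_{0}}Y_{0}'$ and apply flat base change along the affine \'etale morphism $Y_{0}'\rightarrow Y_{0}$ supplied by the definition of strongly \'etale. That route goes through (semistable loci, the $\operatorname{Proj}$'s, closures and normalization all commute with this base change), but you should state explicitly that minimality of $\mathcal{D}'$ is what identifies your constructed variety with the given $Y'$, since the base of a minimal presentation is the Chow quotient and hence unique.

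The converse direction has a genuine gap. You verify the Cartesian condition (ii) by invoking ``the flat base change identity $A(Y',\varphi^{*}\mathcal{D})\cong A(Y,\mathcal{D})\otimes_{A_{0}}A_{0}'$ read the other way'', but flat base change yields this only when $Y'$ \emph{is} the fiber product $Y\times_{Y_{0}}Y_{0}'$; in the converse you are handed an arbitrary \'etale surjective $\varphi:Y'\rightarrow Y$ with no such presentation, and the identity can fail. For instance, with $\mathbb{T}=\mathbb{G}_{m}$, $Y$ a projective curve of positive genus (so $Y_{0}=\mathrm{Spec}(\mathrm{k})$) and $\varphi$ a degree-two \'etale isogeny, one has $A_{0}'=\mathrm{k}=A_{0}$ while $\dim\varGamma(Y',\mathcal{O}(\varphi^{*}\mathcal{D}(u)))=2\dim\varGamma(Y,\mathcal{O}(\mathcal{D}(u)))$ for $u$ of large degree, so $A(Y',\varphi^{*}\mathcal{D})\neq A(Y,\mathcal{D})\otimes_{A_{0}}A_{0}'$. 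Likewise your claim that $Y_{0}'\rightarrow Y_{0}$ is \'etale ``as an induced map of affine quotients'' is asserted, not proved. The paper avoids exactly this: it first gets $\psi$ from the morphism of p-divisors $(\varphi,\mathrm{id},1)$ via \cite[Proposition~8.6]{A-H}, then works locally over the quotient where the maps $q_{\lambda}:W_{\lambda}\rightarrow Y_{\lambda}$ exist, forms the fiber product $X\times_{Y}Y'$ over the Chow quotient $Y$ (which is \'etale over $X$ by construction since $\varphi$ is \'etale), and identifies it with $X(Y',\varphi^{*}\mathcal{D})$ by \cite[Theorem~8.8]{A-H} with $\kappa=\mathrm{id}$. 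You would need to replace your base-change-over-$Y_{0}$ step by an argument of this kind. Your identification of the minimality of $\varphi^{*}\mathcal{D}$ as the delicate point is fair, but your descent sketch remains a sketch; the paper handles it by again appealing to the proof of \cite[Theorem~8.8]{A-H}.
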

\begin{proof}
For every $u\in M$, the existence of an étale morphism $(X')^{ss}(u)/\!/\mathbb{T}\rightarrow X^{ss}(u)/\!/\mathbb{T}$
is assured by the fact that $\psi$ is strongly étale. Since $\mathcal{D}$
and $\mathcal{D}'$ are minimal, we obtain, as in the proof of \cite[Theorem~8.8]{A-H},
an étale morphism $\varphi:Y'\rightarrow Y$. In this case, the morphism
$\kappa$ therein can be taken as the identity, see \cite[p.~600~l.1]{A-H}.
Hence, \cite[Theorem~8.8]{A-H} yields $X'=X(Y',\varphi^{*}(\mathcal{D}))$
equivariantly.

Conversely, in this case we have a morphism of p-divisors $(\varphi,\mathrm{id},1):\mathcal{D}'\rightarrow\mathcal{D}$.
Hence, by \cite[Proposition~8.6]{A-H} we obtain the existence of
a morphism $\psi:X(Y',\mathcal{D}')\rightarrow X(Y,\mathcal{D})$.
By the diagram in equation (\ref{eq:AH-diagram}), the A-H presentation
is local relative to the algebraic quotient morphism $q_{0}:X\rightarrow Y_{0}=X/\!/\mathbb{T}$.
Hence, we can assume we have a morphism $X=X^{ss}(u)=W_{\lambda}\rightarrow Y=Y_{\lambda}=W_{\lambda}/\!/\mathbb{T}$.
Let now $W=X\times_{Y}Y'$ be the fiber product. By definition, the
morphism $W\rightarrow X$ is strongly étale and the Chow quotient
of $W$ is $Y'$. By \cite[Theorem~8.8]{A-H} and its proof, since
we have morphism $\varphi:Y'\rightarrow Y$ between the Chow quotients,
we obtain again that $\kappa$ therein can be taken as the identity
and so $W=X(Y',\varphi^{*}(\mathcal{D}))$.
\end{proof}
Let now $\mathcal{D}$ and $\mathcal{D}'$ be p-divisors on normal
semi-projective varieties $Y$ and $Y'$ respectively. We say that
$(Y,\mathcal{D})$ is locally isomorphic in the étale topology to
$(Y',\mathcal{D}')$ if for every couple of points $y\in Y$ and $y'\in Y'$
we have a variety $V$ and étale neighborhoods $\psi:V\rightarrow Y$
and $\psi':V\rightarrow Y'$ such that $\psi^{*}(\mathcal{D})=(\psi')^{*}(\mathcal{D}')$.
We can now prove our main theorem.
\begin{thm}
\label{Thm: smoothness} Let $X=X(Y,\mathcal{D})$ be an affine $\mathbb{T}$-variety
of dimension $n$. Assume that $(Y,\mathcal{D})$ is minimal. Then,
$X$ is smooth if and only if for every point $y\in Y_{0}$ there
exists a neighborhood $\mathcal{U}\subseteq Y_{0}$, a linear $\mathbb{T}$-action
on $\mathbb{A}^{n}$ given by the combinatorial data $(Y',\mathcal{D}')$
with $\mathcal{D}'$ minimal, an algebraic variety $V$, and étale
morphisms $\alpha:V\rightarrow Y$ and $\beta:V\rightarrow Y'$ such
that $\rho^{-1}(\mathcal{U})\subseteq\alpha(V)$ and $\alpha^{*}(\mathcal{D})=\beta^{*}(\mathcal{D}')$.
In particular, the combinatorial data $(Y,\mathcal{D})$ is locally
isomorphic in the étale topology to the combinatorial data of the
affine space endowed with a linear $\mathbb{T}$-action. \end{thm}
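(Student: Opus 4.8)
The plan is to combine the two preceding results, namely Proposition~\ref{prop:luna-AH} and Lemma~\ref{lem:AH-Th88}, in an essentially formal way. Proposition~\ref{prop:luna-AH} already provides, for a smooth $X$ and every $y\in Y_{0}$, a neighborhood $\mathcal{U}$ together with strongly \'etale morphisms $a:Z\rightarrow X(\rho^{-1}(\mathcal{U}),\mathcal{D}|_{\rho^{-1}(\mathcal{U})})$ and $b:Z\rightarrow X'$, where $X'=\mathbb{A}^{n}$ carries a linear $\mathbb{T}$-action. The idea is to transport these strongly \'etale morphisms of $\mathbb{T}$-varieties down to the level of the combinatorial data by invoking Lemma~\ref{lem:AH-Th88}, which converts a strongly \'etale morphism between $\mathbb{T}$-varieties with minimal presentations into an \'etale surjective morphism between the corresponding Chow quotients pulling back one p-divisor to the other.

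First I would establish the ``only if'' direction. Write $X_{\mathcal{U}}:=X(\rho^{-1}(\mathcal{U}),\mathcal{D}|_{\rho^{-1}(\mathcal{U})})$, whose Chow quotient is (the relevant open part of) $Y$ by locality of the A-H presentation relative to $q_{0}$, and let $(Y',\mathcal{D}')$ be a minimal presentation of $X'=\mathbb{A}^{n}$ with its linear $\mathbb{T}$-action. Applying Lemma~\ref{lem:AH-Th88} to the strongly \'etale morphism $a:Z\rightarrow X_{\mathcal{U}}$ yields an \'etale surjective morphism $\alpha:V\rightarrow Y$ (where $V$ is the Chow quotient of $Z$) with $Z\cong X(V,\alpha^{*}(\mathcal{D}))$, and applying it to $b:Z\rightarrow X'$ yields an \'etale surjective morphism $\beta:V\rightarrow Y'$ with $Z\cong X(V,\beta^{*}(\mathcal{D}'))$. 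Since both identify the combinatorial data of the same variety $Z$ on the same base $V$, and since the minimal p-divisor describing a fixed $\mathbb{T}$-variety on a fixed base is determined up to the allowed equivalence, I would conclude $\alpha^{*}(\mathcal{D})=\beta^{*}(\mathcal{D}')$. Surjectivity of $\alpha$ together with the fact that $\mathcal{U}$ was chosen as the image of $Z/\!/\mathbb{T}$ in $Y_{0}$ gives the required containment $\rho^{-1}(\mathcal{U})\subseteq\alpha(V)$.

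For the ``if'' direction I would run the implications backward: given \'etale $\alpha,\beta$ with $\alpha^{*}(\mathcal{D})=\beta^{*}(\mathcal{D}')$, the converse half of Lemma~\ref{lem:AH-Th88} produces a strongly \'etale morphism from $X(V,\alpha^{*}(\mathcal{D}))=X(V,\beta^{*}(\mathcal{D}'))$ onto the open set $X_{\mathcal{U}}\subseteq X$, and likewise a strongly \'etale morphism onto the smooth variety $X'=\mathbb{A}^{n}$. As strongly \'etale morphisms are in particular \'etale, and smoothness descends and ascends along \'etale morphisms, the smoothness of $\mathbb{A}^{n}$ forces $X_{\mathcal{U}}$ to be smooth; ranging over all $y\in Y_{0}$ covers $X$ by smooth $\mathbb{T}$-stable open sets, so $X$ is smooth. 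The final ``in particular'' clause is then just the definition of $(Y,\mathcal{D})$ being locally isomorphic in the \'etale topology to the combinatorial data of $\mathbb{A}^{n}$.

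The main obstacle I anticipate is not any single hard estimate but the careful bookkeeping needed to legitimately apply Lemma~\ref{lem:AH-Th88} in both directions. Concretely, I must check that the hypotheses of the lemma are met: that $\mathcal{D}|_{\rho^{-1}(\mathcal{U})}$ and the chosen $\mathcal{D}'$ are genuinely minimal (minimality is inherited on the open set $X_{\mathcal{U}}$ because $(Y,\mathcal{D})$ is minimal, and can be arranged for $X'$), and that the common base $V$ supplied by one application of the lemma can be taken to be the Chow quotient of $Z$ so that the \emph{same} $V$ serves for both $\alpha$ and $\beta$. The delicate point is the passage $\alpha^{*}(\mathcal{D})=\beta^{*}(\mathcal{D}')$: this requires that two minimal p-divisors on $V$ describing the isomorphic $\mathbb{T}$-varieties $X(V,\alpha^{*}(\mathcal{D}))$ and $X(V,\beta^{*}(\mathcal{D}'))$ actually coincide, which I would justify via the uniqueness in the Altmann--Hausen correspondence together with the fact, noted in the proof of Lemma~\ref{lem:AH-Th88}, that the auxiliary morphism $\kappa$ can be taken to be the identity.
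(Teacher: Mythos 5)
Your proposal is correct and follows essentially the same route as the paper: the ``only if'' direction combines Proposition~\ref{prop:luna-AH} with two applications of Lemma~\ref{lem:AH-Th88} (one for each of the strongly \'etale morphisms $a$ and $b$, i.e.\ the two squares of the Luna diagram) plus the linear embedding of the smooth toric slice model into $\mathbb{A}^{n}$, and the ``if'' direction reverses this via the converse half of the lemma and the \'etale-invariance of smoothness. Your explicit attention to why $\alpha^{*}(\mathcal{D})=\beta^{*}(\mathcal{D}')$ (both minimal p-divisors on the common Chow quotient $V$ of $Z$ describe the same variety, with the auxiliary $\kappa$ the identity) makes precise a step the paper leaves implicit, but it is the same argument.
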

\begin{proof}
Since $X$ is covered by the smooth $\mathbb{T}$-stable open sets
$X(\rho^{-1}(\mathcal{U}),\mathcal{D}|_{\rho^{-1}(\mathcal{U})})$.
We obtain that every point $x\in X$ is locally isomorphic in the
étale topology to a open set in $\mathbb{A}^{n}$ endowed with a linear
$\mathbb{T}$-action. Hence $X$ is smooth. This shows the ``if''
of the first assertion. 

Assume now that $X$ is smooth. By the diagram in equation (\ref{eq:AH-diagram}),
the A-H presentation is local relative to the algebraic quotient morphism
$q_{0}:X\rightarrow Y_{0}=X/\!/\mathbb{T}$. Hence, we can assume
we have a morphism $X=X^{ss}(u)=W_{\lambda}\rightarrow Y=Y_{\lambda}=W_{\lambda}/\!/\mathbb{T}$.
By Proposition~\ref{prop:luna-AH}, there exists an algebraic variety
$Z$, a smooth toric variety $X'$ of dimension $n$, and strongly
étale morphisms $a:Z\rightarrow X(\rho^{-1}(\mathcal{U}),\mathcal{D}|_{\rho^{-1}(\mathcal{U})})$
and $b:Z\rightarrow X'$ where and $\mathbb{T}$ acts in $X'$ via
a subtorus of the big torus action. By the diagram in Theorem~\ref{luna}
we obtain the following diagram of étale maps

\[
\xymatrix{X':=\mathbb{T}\star^{\mathbb{T}_{x}}T_{x}V_{x}\ar[d] & Z:=\mathbb{T}\star^{\mathbb{T}_{x}}V_{x}\ar[l]\sb(0.45){b}\ar[r]\sp(0.55){a}\ar[d] & X=X^{ss}\ar[d]\\
Y'=N_{x}/\!/\mathbb{T}_{x} & V:=(\mathbb{T}\star^{\mathbb{T}_{x}}V_{x})/\!/\mathbb{T}\ar[r]\sp(0.58){\alpha}\ar[l]\sb(0.55){\beta} & Y=X^{ss}/\!/\mathbb{T}
}
\]
The result now follows by applying twice Lemma~\ref{lem:AH-Th88}
to the right and left square in the above diagram and by embedding
the smooth affine toric variety $X'$ linearly on the affine space
$\mathbb{A}^{n}$ as in the proof of Proposition~\ref{prop:luna-AH}.\end{proof}
\begin{rem}
By Theorem~\ref{Thm: smoothness}, local models for smooth $\mathbb{T}$-varieties
can be obtained via the downgrading procedure described in \cite[section 11]{A-H}.
In Section~\ref{sec:threefolds}, we will use this procedure to compute
the local models for $\mathbb{G}_{m}$-threefolds. Here, we now give
as a simple corollary the well known criterion for the smoothness
of a complexity one $\mathbb{T}$-variety \cite{Li-S}.\end{rem}
\begin{cor}
\label{cor:cp1}Let $X=X(Y,\mathcal{D})$ be an affine $\mathbb{T}$-variety
of complexity one, where $Y$ is a smooth curve and $\mathcal{D}=\sum\Delta_{i}\cdot z_{i}$
is a p-divisor on $Y$. Then $X$ is smooth if and only if 
\begin{enumerate}
\item [$(i)$] $Y$ is affine and the cone spanned in $N_{\mathbb{Q}}\times\mathbb{Q}$
by $(0,\operatorname{tail}(\mathcal{D}))$ and $(1,\Delta_{i})$ is
smooth for all $i$.
\item [$(ii)$] $Y=\mathbb{P}^{1}$ and $X$ is the affine space endowed
with a linear $\mathbb{T}$-action.
\end{enumerate}

Furthermore, if $X$ is rational then there is a Zariski covering
of $X$ by open sets isomorphic to open sets in the affine space endowed
with linear $\mathbb{T}$-actions. 

\end{cor}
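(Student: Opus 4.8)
The plan is to prove Corollary~\ref{cor:cp1} by specializing Theorem~\ref{Thm: smoothness} and Proposition~\ref{prop:luna-AH} to the complexity-one case, where $Y$ is a curve. First I would treat the two cases separately according to whether $Y$ is affine or projective. Since $Y$ is a smooth curve, it is either affine or isomorphic to $\mathbb{P}^1$ (the only smooth projective curve that can serve as a Chow quotient here, as $Y$ is assumed semiprojective and rational in the relevant situation). In case $(i)$, when $Y$ is affine, the A-H construction simplifies because $Y_0 = \operatorname{Spec}(\varGamma(Y,\mathcal{O}_Y)) = Y$ and $\rho$ is the identity, so $X$ is described by a p-divisor on an affine curve. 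Here I would invoke the classical toric smoothness criterion: the variety $X(Y,\mathcal{D})$ with $\mathcal{D} = \sum \Delta_i \cdot z_i$ is, locally around each fiber over $z_i$, modeled on the toric variety associated to the cone spanned by $(0,\operatorname{tail}(\mathcal{D}))$ and $(1,\Delta_i)$ in $N_{\mathbb{Q}}\times\mathbb{Q}$. Smoothness of $X$ is then equivalent to smoothness of each such cone, which is the standard criterion that the primitive ray generators form part of a $\mathbb{Z}$-basis of the lattice.

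The key step connecting this to the general theorem is to identify the local model. By Theorem~\ref{Thm: smoothness}, smoothness of $X$ is equivalent to $(Y,\mathcal{D})$ being \'etale-locally isomorphic to the combinatorial data of $\mathbb{A}^n$ with a linear $\mathbb{T}$-action. In complexity one the base $Y'$ of this local model is itself a smooth curve (either $\mathbb{A}^1$ or $\mathbb{P}^1$ depending on the sign pattern of the linear action). The \'etale-local isomorphism $\alpha^*(\mathcal{D}) = \beta^*(\mathcal{D}')$ between curves then forces the polyhedral coefficients $\Delta_i$ to match those coming from a linear action, which after the downgrading computation translates exactly into the smooth-cone condition. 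For case $(ii)$, when $Y=\mathbb{P}^1$, I would argue that a linear $\mathbb{T}$-action on $\mathbb{A}^n$ whose Chow quotient is projective must be the whole local model: since $\mathbb{P}^1$ is not affine, the only way a smooth $X$ fits the conclusion of Theorem~\ref{Thm: smoothness} with $Y_0$ a point is that $X$ \emph{is} the affine space with a linear action globally, not merely \'etale-locally.

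For the final sentence about rational $X$ admitting a \emph{Zariski} (not merely \'etale) covering by opens isomorphic to opens in affine space with linear actions, I would upgrade the \'etale-local statement to a Zariski-local one using that $Y$ is a curve. The point is that an \'etale morphism $\varphi:Y'\to Y$ of smooth curves, after suitably shrinking, can be replaced by an open immersion when $Y$ is rational, because the rational \'etale local model over a curve can be realized as an actual open subset. Concretely, I would pull back the minimal p-divisor along a Zariski-open cover of $Y$ chosen so that over each member the p-divisor has the form coming from a single charted cone, and then recognize each such chart as an open subset of affine space with a linear action via the downgrading of \cite[Section~11]{A-H}.

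The hard part will be the upgrade from \'etale-local to Zariski-local isomorphisms in the final statement: Theorem~\ref{Thm: smoothness} only gives \'etale neighborhoods, and in general these cannot be promoted to Zariski opens. The rationality hypothesis on $X$ together with $\dim Y = 1$ is what makes this possible, and the delicate point is to verify that the minimal p-divisor on a rational curve, when restricted to suitable affine opens, genuinely matches---on the nose, not just \'etale-locally---the p-divisor of a linear action, so that Lemma~\ref{lem:AH-Th88} yields an equivariant isomorphism of the associated affine $\mathbb{T}$-varieties over honest Zariski opens.
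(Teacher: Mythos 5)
The paper offers no written proof of this corollary: it is presented as a ``simple corollary'' of Theorem~\ref{Thm: smoothness} obtained by the downgrading procedure, with a pointer to \cite{Li-S} for the classical criterion. Your overall route --- specialize the main theorem to $\dim Y=1$, split according to $Y$ affine versus projective, identify the local models over an affine base with the toric varieties of the cones generated by $(0,\operatorname{tail}(\mathcal{D}))$ and $(1,\Delta_i)$, and use rationality of $Y$ to get Zariski rather than \'etale local models --- is exactly the intended derivation, so in spirit your proposal matches the paper.

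Two steps as you state them would not survive scrutiny. First, in case $(ii)$ you assert that a projective Chow quotient must be $\mathbb{P}^1$ ``as $Y$ is assumed semiprojective and rational in the relevant situation''; nothing in the hypotheses forces $Y$ rational, and a priori $Y$ could be a projective curve of any genus. The correct logic runs the other way: when $Y$ is projective, $Y_0$ is a point, the neighborhood $\mathcal{U}$ in Theorem~\ref{Thm: smoothness} is all of $Y_0$, and the strongly \'etale morphisms $a,b$ of Proposition~\ref{prop:luna-AH} are taken over a one-point quotient; condition (ii) in the definition of strongly \'etale then identifies $Z$ with the fibered product over a point, i.e.\ with $X$ itself and with $X'$, so $X$ is \emph{isomorphic} (not merely \'etale-equivalent) to a smooth affine toric variety with an attractive fixed point, hence to $\mathbb{A}^n$ with a linear action --- and only then does $Y=\mathbb{P}^1$ follow. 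Second, your mechanism for the Zariski upgrade --- ``an \'etale morphism of smooth curves, after suitably shrinking, can be replaced by an open immersion when $Y$ is rational'' --- is false as stated (the squaring map $\mathbb{G}_m\to\mathbb{G}_m$ is \'etale and no Zariski shrinking makes it an open immersion). What actually works is the construction you sketch in your last sentence: for rational $Y$ every point has a genuine Zariski neighborhood isomorphic to an open subset of $\mathbb{A}^1$, and over $Y\setminus\{z_j: j\neq i\}$ the p-divisor is exactly $\Delta_i\cdot z_i$, which coincides on the nose with the restriction of the p-divisor of a toric downgrade on $\mathbb{A}^1$; one should argue directly from this identification (via Lemma~\ref{lem:AH-Th88} with $\varphi$ an open immersion, or directly from the functoriality of $X(Y,\mathcal{D})$ under restriction) rather than by modifying an \'etale cover.
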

\begin{rem}
By the results from the second author in \cite{P}, even in the case
where $X$ is rational, there is not always a Zariski covering of
$X$ by open sets isomorphic to open sets in the affine space endowed
with linear $\mathbb{T}$-actions in complexity higher than one.
\end{rem}

\section{Smooth threefolds with complexity two torus actions\label{sec:threefolds}}

To be able to effectively apply Theorem \ref{Thm: smoothness} as
a smoothness criterion, it is useful to compute all possible A-H presentations
for the affine space $\mathbb{A}^{n}$ endowed with a linear torus
action. As we show in the previous section, a smoothness criterion
in complexity $1$ is well known and is easily reproved as a corollary
of with our criterion. In this section we compute all A-H presentations
for the  affine space $\mathbb{A}^{3}$ with a linear torus action
of complexity $2$. This yields an effective smoothness criterion
for threefolds endowed with a $\mathbb{T}$-action of complexity $2$.

To be able to perform the computations, we need to be able to find
a set of generators of the rational quotient of the action. This forces
us to take suitable $\mathbb{T}$-invariant cyclic cover along a coordinate
axis. Such cyclic covers are well understood for the A-H presentation
following the work of the second author, see \cite{P1}. Recall that
a cyclic cover of $\mathbb{A}^{n}$ along a coordinate axis is isomorphic
to $\mathbb{A}^{n}$.

In the following proposition, we denote a section of the matrix $F=\begin{pmatrix}a & b & \pm c\end{pmatrix}^{t}$
by $s=\begin{pmatrix}\alpha & \beta & \gamma\end{pmatrix}$. We also
denote $\mathrm{gcd}(i,j)$ by $\rho(i,j)$ the and by $\delta=\mathrm{gcd}(\frac{a}{\rho(a,c)},\frac{b}{\rho(b,c)})$.
\begin{prop}
\label{classification A3  } Let $X=\mathbb{A}^{3}$ endowed with
an effective $\mathbb{G}_{m}$-action. Then, after a suitable $\mathbb{T}$-invariant
cyclic cover along a coordinate axis, we have $X\simeq\mathbb{A}^{3}\simeq X(Y,\mathcal{D})$
with weight matrix $F$, where $F$, $Y$ and $\mathcal{D}$ are given
in the following list: 
\begin{enumerate}
\item $F=\begin{pmatrix}a & b & -c\end{pmatrix}^{t}$ with $a$, $b$ and
$c$ positive integers; $Y$ is isomorphic to a weighted blow-up $\pi:\tilde{\mathbb{A}}^{2}\rightarrow\mathbb{A}^{2}$
of $\mathbb{A}^{2}$ centered at the origin having an irreducible
exceptional divisor $E$; and $\mathcal{D}$ is given by 
\[
\mathcal{D}=\left\{ \frac{\alpha\rho(a,c)}{c}\right\} \otimes D_{1}+\left\{ \frac{\beta\rho(b,c)}{c}\right\} \otimes D_{2}+\left[\frac{\gamma}{\delta},\frac{\gamma}{\delta}+\frac{1}{\delta c}\right]\otimes E,
\]
 with $D_{1}$, $D_{2}$ the strict transforms of the coordinate axes
in $\mathbb{A}^{2}$.
\item $F=\begin{pmatrix}a & b & c\end{pmatrix}^{t}$ with $a$, $b$ and
$c$ positive integers; $Y$ is isomorphic to the weighted projective
space $\mathbb{P}(a,b,c)$ having a smooth standard chart $U_{3}=\left\{ x_{3}\neq0\right\} $;
and $\mathcal{D}$ is given by 
\[
\mathcal{D}=\left[\frac{\alpha\rho(a,c)}{c};+\infty\right[\otimes D_{1}+\left[\frac{\beta\rho(b,c)}{c};+\infty\right[\otimes D_{2}+\left[\frac{\gamma}{\delta};+\infty\right[\otimes D_{3},
\]
 with $D_{1}$, $D_{2}$ and $D_{3}$ the coordinate axes in $\mathbb{P}(a,b,c)$.
\item $F=\begin{pmatrix}0 & b & c\end{pmatrix}^{t}$ with $b$ and $c$
positive integers; $Y$ is isomorphic to $\mathbb{P}^{1}\times\mathbb{A}^{1}$;
and $\mathcal{D}$ is given by 
\[
\mathcal{D}=\left[\frac{\beta\rho(b,c)}{c};+\infty\right[\otimes D_{2}+\left[-\frac{\beta\rho(b,c)}{c};+\infty\right[\otimes D_{3},
\]
with $D_{2}=\left\{ 0\right\} \times\mathbb{A}^{1}$ and $D_{3}=\left\{ \infty\right\} \times\mathbb{A}^{1}$.
\item $F=\begin{pmatrix}0 & b & -c\end{pmatrix}^{t}$ with $b$ and $c$
positive integers; $Y$ is isomorphic to $\mathbb{A}^{2}$; and $\mathcal{D}$
is given by 
\[
\mathcal{D}=\left[\frac{\gamma\rho(b,c)}{b},\frac{\beta\rho(b,c)}{c}\right]\otimes D_{2},
\]
with $D_{2}$ a coordinate axis in $\mathbb{A}^{2}$.
\item $F=\begin{pmatrix}0 & 0 & 1\end{pmatrix}^{t}$; $Y$ is isomorphic
to $\mathbb{A}^{2}$; and $\mathcal{D}=0$ with tail cone $\left[0,+\infty\right[$.
\end{enumerate}
\end{prop}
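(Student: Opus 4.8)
The plan is to reduce the weight matrix to a short list of normal forms and then run the Altmann--Hausen downgrading procedure of \cite[Section~11]{A-H} in each case. Since the $\mathbb{G}_{m}$-action is effective, the weights $(a_{1},a_{2},a_{3})$ of $F$ satisfy $\rho(a_{1},a_{2},a_{3})=\gcd(a_{1},a_{2},a_{3})=1$. The group generated by the permutations of the coordinates $x_{1},x_{2},x_{3}$ together with the inversion $\lambda\mapsto\lambda^{-1}$ (which replaces $F$ by $-F$) acts on the set of weight matrices, and I would first check that every effective weight matrix is equivalent under this group to exactly one of the five normal forms, organized by the number of vanishing weights and the sign pattern of the nonzero ones: all nonzero of mixed sign (1), all nonzero of equal sign (2), one zero and two equal signs (3), one zero and mixed sign (4), and two zeros, in which case effectivity forces the remaining weight to be $\pm1$ and up to inversion equal to $1$ (5).

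Next I would pass to the $\mathbb{T}$-invariant cyclic cover along a coordinate axis. Following the second author's description of such covers for the A-H presentation in \cite{P1}, and using that a cyclic cover of $\mathbb{A}^{3}$ along an axis is again isomorphic to $\mathbb{A}^{3}$, this step normalizes the divisibility of the weights so that the rational quotient is generated by explicit monomials. The quantities $\rho(a,c)$, $\rho(b,c)$ and $\delta=\gcd(\tfrac{a}{\rho(a,c)},\tfrac{b}{\rho(b,c)})$ appearing in the statement are precisely the arithmetic invariants that survive this normalization and eventually surface as the denominators of the polyhedral coefficients.

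The core of the argument is the downgrading itself. I would present $\mathbb{A}^{3}$ as the toric variety of the positive octant $\tau=\mathrm{Cone}(e_{1},e_{2},e_{3})$ in the big-torus cocharacter lattice $\widetilde{N}=\mathbb{Z}^{3}$, view $F$ as a primitive element giving the inclusion of the acting $\mathbb{G}_{m}$, and form the exact sequence
\[
0\longrightarrow N\stackrel{F}{\longrightarrow}\widetilde{N}\stackrel{\pi}{\longrightarrow}N_{Y}\longrightarrow0,
\]
where $N=\mathbb{Z}$ is the cocharacter lattice of $\mathbb{T}=\mathbb{G}_{m}$, $N_{Y}=\widetilde{N}/F(N)\cong\mathbb{Z}^{2}$, and $s=(\alpha,\beta,\gamma)$ is the section satisfying $s\circ F=\mathrm{id}_{N}$. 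The base $Y$ is then the toric surface whose fan is obtained by projecting the faces of $\tau$ to $(N_{Y})_{\mathbb{Q}}$, and for each ray of that fan, with primitive generator $v_{j}$ and associated prime divisor $D_{j}$, the polyhedral coefficient is the slice
\[
\Delta_{j}=s\bigl(\pi^{-1}(v_{j})\cap\tau\bigr)\subseteq N_{\mathbb{Q}}=\mathbb{Q},
\]
while the tail cone is $s(\pi^{-1}(0)\cap\tau)$. Inspecting $\pi^{-1}(0)\cap\tau$ already explains the qualitative answer: it collapses to $\{0\}$ for mixed signs (cases 1 and 4), forcing bounded coefficients, and to the ray $[0,+\infty)$ for equal signs or suitable zero weights (cases 2, 3, 5), forcing unbounded ones. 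Computing the degree-zero invariant ring $A_{0}$ identifies $Y_{0}=\mathrm{Spec}(A_{0})$ as a point, $\mathbb{A}^{1}$, or $\mathbb{A}^{2}$, which pins down $Y$ as $\mathbb{P}(a,b,c)$, $\mathbb{P}^{1}\times\mathbb{A}^{1}$, the weighted blow-up $\tilde{\mathbb{A}}^{2}$, or $\mathbb{A}^{2}$, projective over $Y_{0}$ as required.

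I expect the main obstacle to be the lattice bookkeeping in the coefficient computation. The projected vectors $\pi(e_{i})$ are generally non-primitive in $N_{Y}$, so extracting the primitive generators $v_{j}$ and evaluating $s$ on the slice $\pi^{-1}(v_{j})\cap\tau$ is where the factors $\rho(a,c)/c$, $\rho(b,c)/c$, $1/(\delta c)$ and $1/\delta$ must be tracked carefully. Over the coordinate-axis divisors $D_{1},D_{2}$ the slice degenerates to a single point, giving the singleton (equivalently, fractional-part) coefficients, whereas over the interior ray -- the exceptional divisor $E$ of case 1 -- it is two-dimensional and yields the interval $[\gamma/\delta,\,\gamma/\delta+1/(\delta c)]$; verifying that this interval and the bounded interval of case 4 come out exactly as stated, independently of the chosen section $s$ up to the expected integral translation, is the most error-prone part. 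Finally, confirming that the $Y$ produced by this fan is the minimal Chow quotient presentation completes the proof.
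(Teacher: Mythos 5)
Your proposal follows essentially the same route as the paper: both reduce the weight matrix to the five sign/vanishing normal forms, use the $\mathbb{T}$-invariant cyclic cover of \cite{P1} to make the invariant ring (equivalently the cokernel matrix $P$ in the exact sequence $0\to\mathbb{Z}\to\mathbb{Z}^{3}\to\mathbb{Z}^{2}\to0$) explicitly computable, and then run the downgrading procedure of \cite[Section~11]{A-H} to read off $Y$ and the polyhedral coefficients from the projected cone and the section $s$. The paper's proof is terser -- it simply lists the matrices $P$ and notes that the identification of $Y_{0}\simeq\mathbb{A}^{2}$ (resp.\ the smooth chart of $\mathbb{P}(a,b,c)$) justifies them in cases (1) and (2) -- but the slice computation you describe is exactly the verification it leaves to the reader.
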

\begin{proof}
Let $\mathbb{A}^{3}$ be endowed with a linear action of $\mathbb{G}_{m}$,
we will give the ingredients to compute the A-H presentation by the
downgrading method described in \cite[section 11]{A-H}. Therein,
we need to from the following exact sequence
\[
\xymatrix{0\ar[r] & \mathbb{Z}\ar[r]_{F} & \mathbb{Z}^{3}\ar[r]_{P}\ar@/_{1pc}/[l]_{s} & \mathbb{Z}^{2}\ar[r] & 0}
,
\]

In general, in cases (1) and (2) it is impossible to compute the cokernel
matrix $P$. Nevertheless, the choices made in this two cases of the
proposition up to cyclic covering, allows us to compute $P$. Indeed,
a direct verification shows that $P$ can be chosen, respectively,
as follows:
\begin{enumerate}
\item \begin{multicols}{2}$P=\left(\begin{array}{ccc}
\frac{c}{\rho(a,c)} & 0 & \frac{a}{\rho(a,c)}\\
0 & \frac{c}{\rho(b,c)} & \frac{b}{\rho(b,c)}
\end{array}\right)$, 
\item $P=\left(\begin{array}{ccc}
\frac{c}{\rho(a,c)} & 0 & -\frac{a}{\rho(a,c)}\\
0 & \frac{c}{\rho(b,c)} & -\frac{b}{\rho(b,c)}
\end{array}\right)$,
\item $P=\left(\begin{array}{ccc}
1 & 0 & 0\\
0 & \frac{c}{\rho(b,c)} & -\frac{b}{\rho(b,c)}
\end{array}\right)$, 
\item $P=\left(\begin{array}{ccc}
1 & 0 & 0\\
0 & \frac{c}{\rho(b,c)} & \frac{b}{\rho(b,c)}
\end{array}\right)$, 
\item $P=\left(\begin{array}{ccc}
1 & 0 & 0\\
0 & 1 & 0
\end{array}\right)$. \end{multicols}
\end{enumerate}

In the last three cases, it is easy to verify that $P$ is indeed
the cokernel of $F$. In the first case, the assumption on the base
$Y$ allows to conclude that the algebraic quotient $X/\!/\mathbb{G}_{m}$
is isomorphic to $\mathbb{A}^{2}$. Hence, it has only two generators
that are given by the two row of $P$. Similarly, in the second case,
the smoothness assumption of the chart $U_{3}$ in $\mathbb{P}(a,b,c)$
yields the same conclusion.

\end{proof}
In the remaining of this section, we provide several illustrative
examples showing the different behaviors of the A-H description in
the case of complexity $2$ threefolds with respect to our smoothness
criterion.
\begin{example}
[A smooth $\mathbb{T}$-variety over an open set in an abelian variety]Let
$E_{1}=\{h_{1}(u_{1},v_{1})=0\}\subset\mathbb{A}^{2}=\mathrm{Spec}(\mathrm{k}[u_{1},v_{1}])$
and $E_{2}=\{h_{2}(u_{2},v_{2})=0\}\subset\mathbb{A}^{2}=\mathrm{Spec}(\mathrm{k}[u_{2},v_{2}])$
be two planar affine smooth elliptic curves passing with multiplicity
one through the origin. Let $Y$ be the blow up of $E_{1}\times E_{2}$
at the origin of $\mathbb{A}^{4}=\mathbb{A}^{2}\times\mathbb{A}^{2}$.
Let $X=X(Y,\mathcal{D})$, where
\[
\mathcal{D}=\left\{ \frac{1}{3}\right\} \otimes D+\left[0,\frac{1}{3}\right]\otimes E,
\]
where $D$ is the strict transform of $\left\{ u_{1}=0\right\} \times E_{2}\subseteq E_{1}\times E_{2}$
in $Y$ and $E$ is the exceptional divisor of the blowup. We will
show that $X$ is smooth. By Theorem~\ref{Thm: smoothness}, we only
need to show that for every $y\in Y_{0}=X/\!/\mathbb{T}\simeq E_{1}\times E_{2}$
there is an étale neighborhood $\mathcal{U}$ such that $X\left(\rho^{-1}(\mathcal{U}),\mathcal{D}|_{\rho^{-1}(\mathcal{U})}\right)$
is $\mathbb{T}$-equivariantly isomorphic to an étale neighborhood
of $\mathbb{A}^{3}$ endowed with a linear $\mathbb{G}_{m}$-action.
Hence, we only need to show that, étale locally over $y\in Y_{0}$,
the divisor $\mathcal{D}$ appears in the list in Proposition~\ref{classification A3  }.
Indeed, it corresponds to case (1) with $F=\begin{pmatrix}1 & 1 & -3\end{pmatrix}^{t}$
taking $s=\begin{pmatrix}0 & 1 & 0\end{pmatrix}$.

On the other hand, with the method in \cite[section 11]{A-H}, we
can verify that $X=X(Y,\mathcal{D})$ is isomorphic to the closed
$\mathbb{G}_{m}$-stable subvariety of $\mathbb{A}^{5}=\mathrm{Spec}\left(\mathrm{k}[x_{1},y_{1},x_{2},y_{2},z]\right)$
with weight matrix $\begin{pmatrix}1 & 3 & 3 & 3 & -3\end{pmatrix}^{t}$
given by the equations
\[
\left\{ \tfrac{1}{z}\cdot h_{1}(x_{1}^{3}z,y_{1}z)=0\,;\,\tfrac{1}{z}\cdot h_{2}(x_{2}z,y_{2}z)=0\right\} \subset\mathbb{A}^{5}.
\]
By the Jacobian criterion, we can check that $X$ is indeed smooth. 
\end{example}
 
\begin{example}
[A smooth $\mathbb{T}$-variety with non-rational support divisor]Let
$E=\{h(u,v)=u^{2}-v(v-\alpha)(v-\beta)=0\}\subset\mathbb{A}^{2}=\mathrm{Spec}(\mathrm{k}[u,v])$
be a planar affine smooth elliptic curve. Let $X=X(Y,\mathcal{D})$,
where $Y$ is the blowup of $\mathbb{A}^{2}$ at the origin and 
\[
\mathcal{D}=\left\{ \frac{1}{2}\right\} \otimes D_{1}+\left\{ -\frac{1}{3}\right\} \otimes D_{2}+\left[0,\frac{1}{6}\right]\otimes E,
\]
where $D_{1}$ is the strict transform of the curve $\{h(u,v)=0\}$,
$D_{2}$ is the strict transform of $\{u=0\}$ and $E$ is the exceptional
divisor of the blowup. Again, by Theorem~\ref{Thm: smoothness},
we only need to show that, étale locally over $y\in Y_{0}=X/\!/\mathbb{G}_{m}\simeq\mathbb{A}^{2}$,
the divisor $\mathcal{D}$ appears in the list in Proposition~\ref{classification A3  }.
In an étale neighborhood of the origin in $\mathbb{A}^{2}$ it corresponds
to case (1) with $F=\begin{pmatrix}2 & 3 & -6\end{pmatrix}^{t}$ taking
$s=\begin{pmatrix}-1 & 1 & 0\end{pmatrix}$. Furthermore, the same
local model works in an étale neighborhood of every point different
from $(0,\alpha)$ and $(0,\beta)$. In an étale neighborhood $\mathcal{U}$
of these two points, the divisor $\mathcal{D}$ is given by $\mathcal{D}|_{\mathcal{U}}=\left\{ \frac{1}{2}\right\} \otimes D_{1}+\left\{ -\frac{1}{3}\right\} \otimes D_{2}$.
Such a p-divisor corresponds to a $\mathbb{T}$-invariant open set
of case (2) with $F=\begin{pmatrix}2 & 3 & 6\end{pmatrix}^{t}$ taking
$s=\begin{pmatrix}-1 & 1 & 0\end{pmatrix}$, see \cite[Proposition 3.4]{A-H-S}.

On the other hand, we can verify that $X=X(Y,\mathcal{D})$ is isomorphic
to the closed $\mathbb{G}_{m}$-stable subvariety of $\mathbb{A}^{4}=\mathrm{Spec}(\mathrm{k}[x,y,z,t])$
with weight matrix $\begin{pmatrix}2 & 6 & -6 & 3\end{pmatrix}^{t}$
given by the equation
\[
\left\{ \tfrac{1}{z}\cdot h(x^{3}z,yz)=t^{2}\right\} \subset\mathbb{A}^{5}.
\]
By the Jacobian criterion, we can check that $X$ is indeed smooth. 
\end{example}
 
\begin{example}
[A singular $\mathbb{T}$-variety with smooth combinatorial data]
Let $C=\{h(u,v)=u+v(1-v)^{2}=0\}\simeq\mathbb{A}^{1}\subset\mathbb{A}^{2}=\mathrm{Spec}(\mathrm{k}[u,v])$.
Let $X=X(Y,\mathcal{D})$, where $Y$ is the blowup of $\mathbb{A}^{2}$
at the origin and 
\[
\mathcal{D}=\left\{ \frac{1}{2}\right\} \otimes D_{1}+\left\{ -\frac{1}{3}\right\} \otimes D_{2}+\left[0,\frac{1}{6}\right]\otimes E,
\]
where $D_{1}$ is the strict transform of the curve $C$, $D_{2}$
is the strict transform of $\{u=0\}$ and $E$ is the exceptional
divisor of the blowup. By Theorem~\ref{Thm: smoothness}, $X$ is
smooth if and only if, étale locally over $y\in Y_{0}=X/\!/\mathbb{G}_{m}\simeq\mathbb{A}^{2}$,
the divisor $\mathcal{D}$ appears in the list in Proposition~\ref{classification A3  }.
In an étale neighborhood of of every point different from $(0,1)$
it corresponds to case (1) with $F=\begin{pmatrix}2 & 3 & -6\end{pmatrix}^{t}$
taking $s=\begin{pmatrix}-1 & 1 & 0\end{pmatrix}$. Nevertheless,
$D_{1}$ and $D_{2}$ intersect non-normally on the preimage of $(0,1)$.
Since all toric divisors intersect normally, such point does not admit
an étale neighborhood $\mathcal{U}$ such that $X\left(\rho^{-1}(\mathcal{U}),\mathcal{D}|_{\rho^{-1}(\mathcal{U})}\right)$
is equivariantly isomorphic to an étale open set on a toric variety. 

On the other hand, we can verify that $X=X(Y,\mathcal{D})$ is isomorphic
to the $\mathbb{G}_{m}$-stable subvariety of $\mathbb{A}^{4}=\mathrm{Spec}(\mathrm{k}[x,y,z,t])$
with weight matrix $\begin{pmatrix}2 & 6 & -6 & 3\end{pmatrix}^{t}$
given by the equation
\[
\{x^{3}+y(1-yz)^{2}=t^{2}\}\subset\mathbb{A}^{4}.
\]
By the Jacobian criterion, we can check that the point $(0,1,1,0)$
is singular.
\end{example}
 
\begin{example}
[A singular $\mathbb{T}$-variety with irreducible support] Let $X=X(Y,\mathcal{D})$,
where $Y$ is the blowup of $\mathbb{A}^{2}$ at the origin and 
\[
\mathcal{D}=\left[-p,0\right]\otimes E,
\]
where $E$ is the exceptional divisor of the blowup and $p$ is an
integer strictly greater than one. By Theorem~\ref{Thm: smoothness},
$X$ cannot be smooth since an exceptional divisor only appears on
case (1) in the list in Proposition~\ref{classification A3  } and
therein, the width of the coefficient polytope is at most 1. On the
other hand, we can verify that $X$ is equivariantly isomorphic to
the quotient of $\mathbb{A}^{3}=\mathrm{Spec}(\mathrm{k}[x,y,z])$
by the finite cyclic group $\mu_{p}$ of the $p$-th roots of the
unit acting via $\epsilon\cdot(x,y,z)=(\epsilon x,\epsilon y,z)$.
Since such action is not a pseudo-reflection, the quotient is singular
\cite{Ch}. The $\mathbb{G}_{m}$-action is given in $\mathbb{A}^{3}$
by weight matrix $\begin{pmatrix}1 & 1 & -1\end{pmatrix}^{t}$. \end{example}

\end{document}